\author{Umar Hayat, \'Alvaro Nolla De Celis and Fawad Ali}
\date{}
\address{Umar Hayat, Department of Mathematics, Quaid-i-Azam University Islamabad, Pakistan}
\email{umar.hayat@qau.edu.pk}
\address{\'Alvaro Nolla de Celis, Universidad Aut\'onoma de Madrid (UAM), Spain}
\email{alvaro.nolla@uam.es}
\address{Fawad Ali, Department of Mathematics, Quaid-i-Azam University Islamabad, Pakistan}
\email{fawadali@math.qau.edu.pk}
\newtheorem{Theorem}{\quad Theorem}[section]
\newtheorem{Corollary}[Theorem]{\quad Corollary}
\newtheorem{Proposition}[Theorem]{\quad Proposition}
\newtheorem{Example}[Theorem]{\quad Example}
\numberwithin{equation}{section}
\newcolumntype{d}[1]{D{.}{.}{#1}}
\newcommand{\C}{\mathbb C}
\newcommand{\N}{\mathbb N}
\DeclareMathOperator{\SL}{SL}
\DeclareMathOperator{\Cn}{C}
\DeclareMathOperator{\BD}{BD}
\DeclareMathOperator{\BT}{BT}
\DeclareMathOperator{\BO}{BO}
\DeclareMathOperator{\BI}{BI}
\DeclareMathOperator{\E}{E}
\DeclareMathOperator{\Irr}{Irr}
\title{COMMUTING GRAPHS ON COXETER GROUPS, \\DYNKIN DIAGRAMS AND FINITE SUBGROUPS OF $\SL(2,\C)$
}
\begin{document}

\begin{abstract}
For a group $H$ and a non empty subset $\Gamma\subseteq H$, the commuting graph $G=\mathcal{C}(H,\Gamma)$ is the graph with $\Gamma$ as the node set and where any $x,y \in \Gamma$ are joined by an edge if $x$ and $y$ commute in $H$. We prove that any simple graph can be obtained as a commuting graph of a Coxeter group, solving the realizability problem in this setup. In particular we can recover every Dynkin diagram of ADE type as a commuting graph. Thanks to the relation between the ADE classification and finite subgroups of $\SL(2,\C)$, we are able to rephrase results from the {\em McKay correspondence} in terms of generators of the corresponding Coxeter groups. We finish the paper studying commuting graphs $\mathcal{C}(H,\Gamma)$ for every finite subgroup $H\subset\SL(2,\C)$ for different subsets $\Gamma\subseteq H$, and investigating metric properties of them when $\Gamma=H$. 
\end{abstract}

\subjclass[2010]{Primary 05C25; Secondary 20F55, 20D60, 14E16}

\keywords{Commuting graph, Coxeter group, Dynkin diagram, ADE Classification, McKay correspondence, metric dimension}

\maketitle

\pagestyle{myheadings}

\markboth{U. Hayat, F. Ali, and A. Nolla de Celis}{COMMUTING GRAPHS ON COXETER GROUPS AND FINITE SUBGROUPS $G\subset\SL(2,\C)$}

\section{Introduction}

For a given group $H$ with center $Z(H)$ we can consider the commuting graph $\mathcal{C}(H)$ consisting of the vertex set $H\backslash Z(H)$ and joining two vertices if and only if they commute. This graph has been studied since \cite{BraFo} in several contexts and for different purposes (see \cite{Abdo}, \cite{DolzanOb} among other examples), one of them being to characterize if given a simple graph $G$ whether there exists or not a group $H$ for which $G\cong\mathcal{C}(H)$, known as the {\em realization problem} (see \cite{GiuKuz} and references therein).
 
In this paper we consider commuting graphs $G=\mathcal{C}(H,\Gamma)$ where $\Gamma$ is a nonempty subset of a group $H$, for which $\Gamma$ is the node set and any $x,y \in \Gamma$ are joined by an edge if and only if $x$ and $y$ commute in $H$ (see for example \cite{Bates}, \cite{ASH}). Clearly $\mathcal{C}(H)=\mathcal{C}(H,H)$. In this context, we are able to obtain any simple graph as the commuting graph of a certain Coxeter group $W_M$ taking $\Gamma$ to be the set of generators of $W_M$ (Theorem \ref{Thm1}), therefore solving the realization problem.
 
In addition, Coxeter groups provide us with a rich collection of groups with remarkable connections with other branches of mathematics and sciences. In this paper we are interested in the correspondences between simple laced finite Coxeter groups and finite subgroups of $\SL(2,\C)$ through the simple laced Dynkin diagrams, also known as the {\em ADE classification}. Finite subgroups of $H\subset\SL(2,\C)$ were classified by F. Klein around 1870 into the families of cyclic $\Cn_n$, binary dihedral groups $\BD_{4n}$, and the exceptional cases of binary tetrahedral $\BT_{24}$, binary octahedral $\BO_{48}$ and binary icosahedral $\BI_{120}$. These groups have played a central role not only in group theory but in many other areas such as algebraic geometry, singularity theory, simple Lie algebras and representation theory (see \cite{IN}, \cite{Slodowy} for references and further reading). 

Given a finite $H\subset\SL(2,\C)$ we can consider the orbifold quotient $\C^2/H$, also known as a rational double point or Du Val singularity. J. McKay in \cite{McKay} observed that the resolution graph of the singularity is precisely the (simple laced) Dynkin diagram of the subgroup $H$, initiating the so called {\em McKay Correspondence} (see \cite{Reid}). The realization of (simple laced) Dynkin diagrams as commuting graphs $\mathcal{C}(W_M,\Gamma)$ of Coxeter groups allows us to relate the geometry of the resolution of the singularity $\C^2/H$ with the representation theory of $H$ in terms of the generators $\Gamma$ of $W_M$ (Theorem \ref{Propo}).

We conclude studying commuting graphs $\mathcal{C}(H,\Gamma)$ on finite subgroups $H\subset\SL(2,\C)$ for different subsets $\Gamma\subseteq H$ using the software GAP \cite{GAP4}. For the case $\Gamma=H$, we use this description to compute metric properties of $\mathcal{C}(H)$, in particular the radius, the diameter, their detour analogues and the metric dimension (Theorem \ref{dist1}).

\section{Basic notions}

We denote by $G$ a graph with node set $V(G)$ and edge set $E(G)$. The total number of nodes of the graph $G$, denoted by $|G|$, is called the order of $G$. We write $a\sim b$ if the nodes $a$ and $b$ are adjacent, otherwise $a\nsim b$. 

The join of any two graphs $G_{1}$ and $G_{2}$, denoted by $G_{1} \vee G_{2}$, is the graph with node set $V(G_{1})\cup V(G_{2})$ and edge set $E(G_{1}) \cup E(G_{2}) \cup \{ y \sim z : y \in V(G_{1}), z \in V(G_{2})\}$. The complement of a graph $G$, denoted by $\overline{G}$, is the graph with the same vertex set $V(G)$ where $a\sim b$ in $\overline{G}$ if $a\nsim b$ in $G$. A graph $K_{n}$ with $n$ nodes is called complete graph if any two different nodes of $K_{n}$ are connected by exactly one edge. We denote by $rK_{n}$ the graph consisting of $r$ copies of the complete graph $K_{n}$.

The degree $\deg(a)$ of a node $a$ in a graph $G$ is the number of nodes in $G$ that are incident to $a$ where the loops are counted twice. We may write $\deg_G(a)$ if we want to emphasize the underlying graph. For definitions and further explanations see \cite{4}.

\section{Commuting graphs on Coxeter groups}

Let $M=(m_{ij})_{1\leq i,j\leq n}$ be an $n\times n$ symmetric matrix with $m_{ij}\in\N\cup\{\infty\}$ such that $m_{ii}=1$ and $m_{ij}\geq2$ if $i\neq j$. The {\em Coxeter group of type $M$} is defined as
\[
W_M = \left< s_1,\ldots,s_n  ~|~(s_is_j)^{m_{ij}}=1, ~1\leq i,j\leq n,~m_{ij}<\infty\right>.
\]

See \cite{11} for a standard reference. Observe that condition $m_{ij}=1$ implies that $s_i^2=1$ for all $i$, that is, every generator has order 2. The notation $m_{ij}=\infty$ is reserved for the case when there is no relation of the form $(s_is_j)^m=1$ for any $m$. For example, if $n=1$ then $W_M=\Cn_2$ the cyclic group of order 2 and if $n=2$ then $M=\left(\begin{smallmatrix}1&m\\m&1\end{smallmatrix}\right)$ and $W_M=D_{2m}$ is the dihedral group of order $2m$.

The following result observes that it is possible to realize any simple graph, i.e.\ any unweighted, undirected graph containing no loops or multiple edges, as the commuting graph of an (in general) infinite Coxeter group. 

\begin{Theorem}\label{Thm1} Let $G$ be a simple graph with $n$ vertices. Then $G\cong\mathcal{C}(W_M,\Gamma)$ where $\Gamma =\{ s_1,\ldots, s_n\}$ is the set of generators of a Coxeter group $W_M$.
\end{Theorem}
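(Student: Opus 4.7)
The approach is to construct the Coxeter matrix $M$ directly from the adjacency structure of $G$. Label the vertices of $G$ as $v_1,\ldots,v_n$ and define the $n\times n$ symmetric matrix $M=(m_{ij})$ by setting $m_{ii}=1$, $m_{ij}=2$ whenever $v_i\sim v_j$ in $G$, and $m_{ij}=\infty$ (any fixed integer $\geq 3$ would also work) whenever $v_i\nsim v_j$. This is a valid Coxeter matrix, so it yields a Coxeter group $W_M$ with generators $\Gamma=\{s_1,\ldots,s_n\}$. The plan is then to show that the bijection $v_i\mapsto s_i$ is a graph isomorphism from $G$ to $\mathcal{C}(W_M,\Gamma)$.

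To check this, fix $i\neq j$. Since $s_i^2=s_j^2=1$, the generators $s_i$ and $s_j$ commute in $W_M$ if and only if $(s_is_j)^2=1$, equivalently if the order of $s_is_j$ divides $2$. In the case $v_i\sim v_j$ we have $m_{ij}=2$, so the relation $(s_is_j)^2=1$ is among the defining relations of $W_M$ and therefore $s_i$ and $s_j$ commute, giving $s_i\sim s_j$ in $\mathcal{C}(W_M,\Gamma)$.

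The reverse implication is the real content: when $v_i\nsim v_j$ (so $m_{ij}\geq 3$ or $m_{ij}=\infty$), I must rule out that $s_is_j=s_js_i$ holds as a consequence of the other relations. For this I invoke the classical fact (see \cite{11}) that for any two generators of a Coxeter group the parabolic subgroup $\langle s_i,s_j\rangle$ is precisely the dihedral group of order $2m_{ij}$ (with $m_{ij}=\infty$ meaning the infinite dihedral group), and in particular the product $s_is_j$ has order exactly $m_{ij}$. Once this is granted, $m_{ij}\geq 3$ forces $s_is_j\neq (s_is_j)^{-1}=s_js_i$, so $s_i$ and $s_j$ fail to commute, and $s_i\nsim s_j$ in $\mathcal{C}(W_M,\Gamma)$.

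The main obstacle is exactly this last point: showing that no hidden relation in $W_M$ accidentally collapses $s_is_j$ to an element of order $\leq 2$ when $m_{ij}\geq 3$. Without the parabolic subgroup theorem this would require a hands-on argument (for instance via the Tits geometric representation, which provides a faithful linear representation of $W_M$ in which each $\langle s_i,s_j\rangle$ acts as the appropriate dihedral group). Since this statement is standard in Coxeter group theory, citing \cite{11} is sufficient and the proof reduces to the verification outlined above.
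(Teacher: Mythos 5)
Your proposal is correct and uses the same construction as the paper: build $M$ with $m_{ij}=2$ exactly when $v_i\sim v_j$ and $m_{ij}\geq 3$ otherwise. The only difference is that you explicitly justify the nontrivial direction --- that $m_{ij}\geq 3$ really prevents $s_i$ and $s_j$ from commuting, via the standard fact that $\langle s_i,s_j\rangle$ is dihedral of order $2m_{ij}$ --- whereas the paper simply asserts that generators commute if and only if $m_{ij}=2$; your version is the more careful one.
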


\begin{proof}
In a Coxeter group $W_M$ any pair of generators $s_i$ and $s_j$ commute if and only if $m_{ij}=2$. Therefore, the commuting graph $\mathcal{C}(W_M,\Gamma)$ is defined as the graph with vertex set $\Gamma$ and where two nodes $s_i$ and $s_j$ are joined if and only if $m_{ij}=2$. This fact allows us to recover $G$ as the commuting graph of the Coxeter group $W_M$ of matrix type $M$ with 
\[
m_{ij} = \left\{ 
\begin{array}{ll} 
1 & \text{, if $i=j$} \\2 & \text{, if $i\sim j$} \\ \geq3 & \text{, if $i\nsim j$.}
 \end{array}\right.
\]  
\end{proof}

\begin{Example} The Petersen graph is the commuting graph $\mathcal{C}(W_M,\Gamma)$ where the Coxeter group $W_M$ is given by the following matrix $M$: 
\[
\begin{array}{cc}
\begin{tikzpicture}[rotate=90]
  \GraphInit[vstyle=Empty]
  \SetVertexNoLabel
  \SetUpVertex[MinSize=7pt]
  \grPetersen[RA=1.5,RB=0.75]
  \AssignVertexLabel{a}{\scriptsize{8},\scriptsize{7},\scriptsize{6},\scriptsize{10},\scriptsize{9}}
  \AssignVertexLabel{b}{\scriptsize{3},\scriptsize{2},\scriptsize{1},\scriptsize{5},\scriptsize{4}}
\end{tikzpicture}

&

\begin{tikzpicture}
\node at (0,2) {$
M = \left(\begin{smallmatrix}1&&2&2&&2&&&&\\&1&&2&2&&2&&&\\2&&1&&2&&&2&&\\2&2&&1&&&&&2&\\&2&2&&1&&&&&2\\2&&&&&1&2&&&2\\&2&&&&2&1&2&&\\&&2&&&&2&1&2&\\&&&2&&&&2&1&2\\&&&&2&2&&&2&1\end{smallmatrix}\right)$ 
   };
\end{tikzpicture}

\end{array}
\]

The blank spaces in $M$ correspond to any natural number greater or equal to 3 or $\infty$. Thus there are infinitely many Coxeter groups giving the same commuting graph.
\end{Example}

\subsection{Commuting graphs and Coxeter diagrams} 

Let $W_M$ be a Coxeter group with generator set $\Gamma=\{s_1,\ldots,s_n\}$. The {\em Coxeter diagram} is the graph with vertex set $\Gamma$ where two nodes $i$ and $j$ are joined if and only if $m_{ij}\geq3$. In the case $m_{ij}>3$ the edge $i-j$ is classically labeled by $m_{ij}$, although in what follows we will consider just the {\em Coxeter graph}, denoted by $Cox(W_M)$, which is the Coxeter diagram forgetting the labelling. 

Therefore we can associate to a given Coxeter group $W_M$ two different graphs, $\mathcal{C}(W_M,\Gamma)$ and $Cox(W_M)$. The following result shows the relation between them.

\begin{Proposition} Let $W_M$ be a Coxeter group. Then
\[ Cox(W_M) = \overline{\mathcal{C}(W_M,\Gamma)} \]
\end{Proposition}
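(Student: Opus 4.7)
The two graphs share the vertex set $\Gamma$, so what needs to be checked is that their edge sets are complementary in the complete graph on $\Gamma$. The plan is to first characterize commutation between two generators purely in terms of the Coxeter matrix entry $m_{ij}$, and then compare this with the definitions of the two graphs.

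The key claim is: for $i\neq j$, the generators $s_i$ and $s_j$ commute in $W_M$ if and only if $m_{ij}=2$. The ``if'' direction is immediate from the presentation: if $m_{ij}=2$, then $(s_is_j)^2=1$, and since $s_i^2=s_j^2=1$ this unwinds to $s_is_j=s_js_i$. The ``only if'' direction is the one that requires actual work, and is where I expect the main (though standard) obstacle to lie.

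To handle the converse, I would invoke the classical theorem on standard parabolic subgroups of Coxeter groups (see \cite{11}): for any subset $I\subseteq\{1,\ldots,n\}$, the subgroup $W_I=\langle s_i : i\in I\rangle$ is itself a Coxeter group with matrix $(m_{ij})_{i,j\in I}$. Applied to $I=\{i,j\}$, this identifies $\langle s_i,s_j\rangle$ with the dihedral group $D_{2m_{ij}}$ of order $2m_{ij}$ (or with the infinite dihedral group when $m_{ij}=\infty$) faithfully. Inside such a dihedral group, the product $s_is_j$ has order exactly $m_{ij}$; so if $m_{ij}\geq 3$, then $(s_is_j)^2\neq 1$ and consequently $s_is_j\neq s_js_i$. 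This rules out any ``accidental'' commuting that could occur only after passing to the full group $W_M$.

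With the characterization established, the comparison is immediate: the edges of $\mathcal{C}(W_M,\Gamma)$ are the pairs $\{s_i,s_j\}$ with $m_{ij}=2$, while the edges of $Cox(W_M)$ are the pairs with $m_{ij}\geq 3$. Because $m_{ij}\geq 2$ whenever $i\neq j$, these two conditions partition the set of pairs, yielding $Cox(W_M)=\overline{\mathcal{C}(W_M,\Gamma)}$. The only nontrivial ingredient is the faithful realization of the two-generator parabolic as a dihedral group; without it one can only prove one inclusion.
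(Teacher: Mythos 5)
Your proof is correct and takes essentially the same approach as the paper: both reduce the statement to the fact that, for $i\neq j$, the commuting-graph edge condition $m_{ij}=2$ and the Coxeter-graph edge condition $m_{ij}\geq 3$ partition the off-diagonal pairs, so the two edge sets are complementary in $K_n$. The one difference is that you explicitly justify the ``only if'' direction of the commutation criterion via the faithfulness of the two-generator parabolic subgroup (so that $s_is_j$ has order exactly $m_{ij}$), whereas the paper simply asserts this criterion in the proof of its realizability theorem; your version is the more complete one.
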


\begin{proof} In the Coxeter graph we have that $i\sim j$ if and only if $m_{ij}\geq3$ so the adjacency matrix $A=(a_{ij})$ of $Cox(W_M)$ has every entry equal to 1 except $a_{ij}=0$ when $m_{ij}=1$ or $2$. On the other hand, the adjacency matrix $A'=(a'_{ij})$ of $\mathcal{C}(W_M,\Gamma)$ has entries $a'_{ij}=1$ just when $m_{ij}=2$. Therefore $A+A'$ is the matrix with entries 1 if $i\neq j$ and 0 if $i=j$, so that the corresponding graphs $Cox(W_M)$ and $\mathcal{C}(W_M,\Gamma)$ are complementary.
\end{proof}

In particular $\mathcal{C}(W_M,\Gamma)$ and $Cox(W_M)$ are two simple graphs of $n$ nodes with the same automorphism group and every node corresponds to an involution $s_i\in W_M$. For both graphs the degree at each vertex is related by the following formula.
 
\begin{Corollary} If $W_M$ is a Coxeter group and $\Gamma=\{s_1,\ldots,s_n\}$. Then
\[\deg_{\mathcal{C}(W_M,\Gamma)}(i)+\deg_{Cox(W_M)}(i) = n-1,~~~~~\text{for $i\in{1,\ldots,n}$.} \] 
\end{Corollary}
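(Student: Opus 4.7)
The plan is to deduce this directly from the preceding proposition, which establishes that $Cox(W_M)$ and $\mathcal{C}(W_M,\Gamma)$ are complementary graphs on the same vertex set $\Gamma$ of size $n$. The statement then reduces to a standard fact about a simple graph and its complement: for any vertex $v$ in a simple graph $G$ on $n$ vertices, one has $\deg_G(v) + \deg_{\overline{G}}(v) = n-1$.

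The short argument I would write is as follows. Fix $i \in \{1,\ldots,n\}$. Since $\mathcal{C}(W_M,\Gamma)$ and $Cox(W_M)$ are simple graphs on the common vertex set $\Gamma$, every other vertex $j \neq i$ falls into exactly one of two cases depending on $m_{ij}$: either $m_{ij}=2$, in which case $j$ contributes to $\deg_{\mathcal{C}(W_M,\Gamma)}(i)$ but not to $\deg_{Cox(W_M)}(i)$; or $m_{ij}\geq 3$ (allowing $m_{ij}=\infty$), in which case it contributes to $\deg_{Cox(W_M)}(i)$ but not to $\deg_{\mathcal{C}(W_M,\Gamma)}(i)$. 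Summing over the $n-1$ vertices $j \neq i$ yields the identity.

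Alternatively, and more compactly, I would simply cite the previous proposition: since $Cox(W_M) = \overline{\mathcal{C}(W_M,\Gamma)}$, the adjacency matrices satisfy $A + A' = J - I$, where $J$ is the all-ones matrix and $I$ the identity. Summing the $i$-th row gives $\deg_{Cox(W_M)}(i) + \deg_{\mathcal{C}(W_M,\Gamma)}(i) = n - 1$.

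There is no real obstacle here; the result is essentially a corollary in the strict sense of the word, being a one-line consequence of the preceding proposition together with the elementary complement-degree identity. The only thing to be mindful of is ensuring that the graphs are indeed simple (no loops, no multiple edges), which is guaranteed by the definitions: the Coxeter graph has no edge from $i$ to itself because $m_{ii}=1 \neq \geq 3$, and the commuting graph has no loops because the condition $m_{ii}=2$ fails.
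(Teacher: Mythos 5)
Your proposal is correct and follows essentially the same route as the paper: both deduce the identity from the complementarity $Cox(W_M) = \overline{\mathcal{C}(W_M,\Gamma)}$ established in the preceding proposition, observing that the two graphs together make up $K_n$ (equivalently, that the adjacency matrices sum to $J-I$), so the degrees at each vertex add to $n-1$. No gaps.
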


\begin{proof} Since $\mathcal{C}(W_M,\Gamma)$ and $Cox(W_M)$ are complement graphs we have that
\[ \mathcal{C}(W_M,\Gamma)+Cox(W_M) = \mathcal{C}(W_M,\Gamma)+\overline{\mathcal{C}(W_M,\Gamma)} = K_n \]
where $K_n$ is the complete graph of $n$ vertices. Therefore
\[ \deg_{\mathcal{C}(W_M,\Gamma)}(i)+\deg_{Cox(W_M)}(i) = \deg_{K_n}(i) = n-1.\]
\end{proof}

\begin{Example} Let $W_M$ be the affine Coxeter group $\widetilde{A}_5$, which is defined by the matrix \\$M = \left(\begin{smallmatrix}1&3&2&2&2&3\\3&1&3&2&2&2\\2&3&
1&3&2&2\\2&2&3&1&3&2\\2&2&2&3&1&3\\3&2&2&2&3&1\end{smallmatrix}\right)$. This matrix gives the adjacency matrix $A = \left(\begin{smallmatrix}0&0&1&1&1&0\\0&0&0&1&1&1\\1&0&
0&0&1&1\\1&1&0&0&0&1\\1&1&1&0&0&0\\0&1&1&1&0&0\end{smallmatrix}\right)$ for the graph $\mathcal{C}(\widetilde{A}_5,\Gamma)$. Thus we obtain the complementary graphs:
\[
\begin{array}{cc}

\begin{tikzpicture} 
\node[name=s,regular polygon, regular polygon sides=5, minimum size=2cm] at (0,0) {}; 
\node (C1) at (s.corner 1) [inner sep=1.25pt] {\scriptsize $1$};
\node (C2) at (s.corner 2) [inner sep=1.25pt] {\scriptsize $2$};
\node (C3) at (s.corner 3) [inner sep=1.25pt] {\scriptsize $3$};
\node (C4) at (s.corner 4) [inner sep=1.25pt] {\scriptsize $4$};
\node (C5) at (s.corner 5) [inner sep=1.25pt] {\scriptsize $5$};
\draw[-,thick] (C1) -- (C2) -- (C3) -- (C4) -- (C5) -- (C1);
\node at (1.75,0) {$Cox(\widetilde{A}_5)$};
\end{tikzpicture} 

&

\begin{tikzpicture} 
\node[name=s,regular polygon, regular polygon sides=5, minimum size=2cm] at (0,0) {}; 
\node (C1) at (s.corner 1) [inner sep=1.25pt] {\scriptsize $1$};
\node (C2) at (s.corner 2) [inner sep=1.25pt] {\scriptsize $2$};
\node (C3) at (s.corner 3) [inner sep=1.25pt] {\scriptsize $3$};
\node (C4) at (s.corner 4) [inner sep=1.25pt] {\scriptsize $4$};
\node (C5) at (s.corner 5) [inner sep=1.25pt] {\scriptsize $5$};
\draw[-,thick] (C1) -- (C4) -- (C2) -- (C5) -- (C3) -- (C1); 

\node at (1.75,0) {$\mathcal{C}(\widetilde{A}_5,\Gamma)$};
\end{tikzpicture}

\end{array}
\]

\end{Example}

\subsection{Commuting graphs, Dynkin diagrams and the McKay correspondence} 
It is well know that finite and irreducible Coxeter groups are classified by diagrams of types $A_n (n\geq1)$, $B_n (n\geq3)$, $D_n (n\geq 4)$, $E_6$, $E_7$, $E_8$, $F_4$, $G_2$, $H_3$, $H_4$ and $I_2(m)$, which correspond precisely with the semisimple Lie algebras. If we take from this classification only the ones that contains no multiple (or labelled) edges we obtain the so called {\em Dynkin diagrams of ADE type}: $A_n$, $D_n (n\geq 4)$, $E_6$, $E_7$, $E_8$, also named as the {\em ADE classification}. 

Among several correspondences and appearances of these diagrams in other branches of mathematics, we focus in the relation of the {\em ADE classification} with finite subgroups of $\SL(2,\C)$. Namely, cyclic groups $\Cn_n$, binary dihedral groups $\BD_{4n}$, binary tetrahedral $\BT_{24}$, binary octahedral $\BO_{48}$ and binary icosahedral $\BI_{120}$, correspond to diagrams $A_n$, $D_n$, $E_6$, $E_7$ and $E_8$ respectively. 

Firstly, from a group theoretical point of view, Dynkin diagrams of ADE type reveals the relations between irreducible representations of the corresponding group $H\subset\SL(2,\C)$ while tensoring with the natural representation $V$ via the McKay graph (see \ref{Propo} (b) below). Secondly, from an algebraic-geometric point of view, the quotient space $\C^2/H$ has an isolated singularity at the origin for which we can construct its minimal resolution $\pi:Y\to\C^2/H$. The exceptional set $\pi^{-1}(0)=\cup E_i\subset Y$ is a finite union of rational curves, and the Dynkin diagram is the resolution graph of the singularity. In fact, the intersection matrix of the $E_i$ is the negative of the Cartan matrix of the subgroup $H$ (see \cite{McKay}). 

In the following result we rephrase both of the above observations in terms of commuting graphs of Coxeter groups.

\begin{Theorem}\label{Propo} Let $G$ be a Dynkin diagram of ADE type and let $H$ be the corresponding finite subgroup $H\subset\SL(2,\C)$. Then 
\begin{itemize}
\item[(a)] Every Dynkin diagram of ADE type is the commuting graph $\mathcal{C}(W_M,\Gamma)$ of a Coxeter group $W_M$ with $\Gamma=\{s_1,\ldots,s_n\}$.
\item[(b)] Let $H\subset\SL(2,\C)$ be the finite subgroup which corresponds to the graph $G$. Let $\Irr H=\{\rho_0,\ldots,\rho_n\}$ be the set of irreducible representations of $H$, where $\rho_0$ denotes the trivial representation, and let $V$ be the natural representation of $H$. Then 
\[
V\otimes\rho_i = \sum_{j\in J}\rho_j,~~~~~~~\text{where $J=\{j|(s_is_j)^2=1$ with $i\neq j\}$.}
\]  
\item[(c)] The Cartan matrix of $H$ is $C=2I_n-A$ where $I_n$ is the identity $n\times n$ matrix and $A$ is the adjacency matrix of $\mathcal{C}(W_M,\Gamma)$. Moreover, $-C$ is the intersection matrix of the exceptional divisors $E_i$, for $i=1,\ldots,n$, in the minimal resolution of singularities of $C^2/H$.
\end{itemize}
\end{Theorem}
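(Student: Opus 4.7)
My plan is to treat the three parts in sequence, each of which is essentially a translation of a classical statement into the dictionary established by Theorem \ref{Thm1}.

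Part (a) is a direct application of Theorem \ref{Thm1}: every Dynkin diagram of ADE type is a simple graph on $n$ vertices, so we obtain a realizing Coxeter group by taking $m_{ii}=1$, $m_{ij}=2$ whenever $i\sim j$ in $G$, and $m_{ij}\geq 3$ otherwise (the concrete choice $m_{ij}=3$ for all non-edges is convenient and makes $\mathrm{Cox}(W_M)=\overline{G}$). The resulting $W_M$ satisfies $\mathcal{C}(W_M,\Gamma)\cong G$ by construction.

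For Part (b) I would first install the bijection between $\Gamma$ and $\Irr H$ coming from the identification of $G$ with the McKay graph of $H$. The central observation is that each generator $s_i$ is an involution, so $(s_is_j)^2=1$ with $i\neq j$ is equivalent to $s_is_j=s_js_i$, i.e.\ to $m_{ij}=2$, which by Part (a) is exactly the adjacency relation in $G$. Thus the index set $J$ consists precisely of the neighbours of $i$ in the Dynkin diagram, and the formula $V\otimes\rho_i=\sum_{j\in J}\rho_j$ is McKay's original theorem \cite{McKay} rewritten in the Coxeter language.

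Part (c) is a direct matrix comparison. The adjacency matrix $A$ of $\mathcal{C}(W_M,\Gamma)$ has $A_{ii}=0$ and $A_{ij}=1$ precisely when $i\sim j$ in $G$, while the ADE Cartan matrix satisfies $C_{ii}=2$, $C_{ij}=-1$ when $i\sim j$, and $C_{ij}=0$ otherwise, so $C=2I_n-A$ is immediate. The remaining claim is the classical fact (see \cite{McKay}, \cite{Slodowy}) that the exceptional divisors $E_i$ in the minimal resolution $\pi:Y\to\C^2/H$ are smooth rational $(-2)$-curves meeting transversely along the Dynkin diagram, so the intersection matrix has $-2$ on the diagonal, $+1$ on adjacent entries, and $0$ elsewhere; that is, $A-2I_n=-C$.

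The main obstacle is bookkeeping rather than mathematical: the indexing of $\Gamma$ and $\Irr H$ must be matched via the McKay graph, which accommodates a node for the trivial representation $\rho_0$, so one must apply Part (a) to the appropriate Dynkin diagram with the correct number of vertices. Once the dictionary $s_i\leftrightarrow\rho_i$ and the equivalence $m_{ij}=2\Leftrightarrow i\sim j$ in $G$ are in place, the three statements reduce respectively to the realization theorem, the McKay correspondence, and the standard description of the exceptional set of a Du Val singularity, and the verification in each case is short.
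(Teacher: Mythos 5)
Your proposal is correct and follows essentially the same route as the paper: part (a) via Theorem \ref{Thm1}, part (b) by identifying the McKay graph (restricted to the nontrivial representations) with $\mathcal{C}(W_M,\Gamma)$ and using that $(s_is_j)^2=1$ for involutions means $m_{ij}=2$, and part (c) by the standard description of the Cartan matrix and of the exceptional $(-2)$-curves in the minimal resolution. Your remark about the bookkeeping around $\rho_0$ corresponds exactly to the paper's step of passing to the induced subgraph on $\Irr H\setminus\{\rho_0\}$.
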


\begin{proof}
Part {\em (a)} is a direct application of Theorem \ref{Thm1}. For part {\em (b)}, we first recall the construction of the McKay graph. For any $\rho_i\in\Irr H$ the representation $V\otimes\rho_i$ decomposes as a sum of irreducible representations as:
\[
V\otimes\rho_i = \sum_{\rho\in\Irr H}\alpha_{ij}\rho_j.
\]
The {\em McKay graph} has vertex set $\Irr H$ and has $\alpha_{ij}$ edges between $\rho_i$ and $\rho_j$. By \cite{McKay} this graph corresponds to the extended (or affine) Dynkin diagram of $H$. Now considering the induced subgraph of $\Irr H\backslash\{\rho_0\}$ and using {\em (a)}, we have the $\alpha_{ij}$ is precisely the $(i,j)$-th entry of the adjacency matrix of $\mathcal{C}(W_M,\Gamma)$. Therefore $\alpha_{ij}=1$ if $s_i$ and $s_j$ commutes in $W_M$, that is $(s_is_j)^2=1$. 

For {\em (c)} again by the McKay correspondence in \cite{McKay} the Dynkin diagram (or equivalently the McKay graph) is the dual graph of $\pi^{-1}(0)$, that is, there is a vertex for every $E_i$ with $i=1,\ldots,n$, and we join $E_i$ with $E_j$ if $E_i\cap E_j\neq\emptyset$. Since $C=2I_n-A$ where $A$ is the adjacency matrix of the Dynkin graph, by {\em (a)} the result follows.
\end{proof}

In Figure \ref{DykinCoxeter} we show every subgroup of $\SL(2,\C)$, their corresponding ADE Dynkin graph and Coxeter matrix $M$. The entries $m_{ij}$ which are not shown consist of integers greater or equal than 3 or $\infty$.

\begin{figure}[h]
\[
\arraycolsep=4pt\def\arraystretch{2.2}
\begin{array}{|c|c|c|c|}
\hline
\text{Type} & H\subset\SL(2,\C) & \mathcal{C}(W_M,\Gamma) & M \\
\hline
\begin{pspicture}(0,1)(1,1.5)
	\rput(0.5,1.35){$A_n$} 
\end{pspicture}
& 
\begin{pspicture}(0,1)(1,1.5)
	\rput(0.5,1.35){$\Cn_n$}
\end{pspicture}	
 & 
\begin{pspicture}(0,1)(1,1.5)
	\psset{arcangle=15,nodesep=2pt}
	\rput(-0.25,0.75){
	\scalebox{0.9}{
	\rput(0,0.75){$\bullet$}\rput(0,0.5){\tiny $1$}
	\rput(0.5,0.75){$\bullet$}\rput(0.5,0.5){\tiny $2$}
	\rput(1.5,0.75){$\bullet$}\rput(1.5,0.5){\tiny $n$}	
	\rput(1.035,0.75){$\cdots$}
	\psline(0,0.75)(0.75,0.75)
	\psline(1.25,0.75)(1.5,0.75)
	}}
\end{pspicture}
&
\begin{pspicture}(0,0)(1,1.25)
	\rput(0.5,0.45){$\left(\begin{smallmatrix}1&2&&&&\\2&1&2&&&\\&2&1&2&&\\&&&\text{\scalebox{0.5}{$\ddots$}}&&\\&&&2&1&2\\&&&&2&1\end{smallmatrix}\right)$}
\end{pspicture} \\

\hline
\begin{pspicture}(0,0)(1.25,1.65)
	\rput(0.65,0.7){$D_{n+2}$} 
\end{pspicture}
& 
\begin{pspicture}(0,0)(1.25,1.65)
	\rput(0.65,0.7){$\BD_{4n}$}
\end{pspicture}	

& 
\begin{pspicture}(0,0)(1.25,1.65)
	\psset{arcangle=15,nodesep=2pt}
	\rput(-0.5,-0.15){
	\scalebox{0.9}{
	\rput(0,0.75){$\bullet$}\rput(0,0.5){\tiny $1$}
	\rput(1,0.75){$\bullet$}\rput(1,0.5){\tiny $n$-$1$}	
	\rput(1.5,0.75){$\bullet$}\rput(1.5,0.5){\tiny $n$}	
	\rput(1.5,1.25){$\bullet$}\rput(1.5,1.5){\tiny $n$+$1$}	
	\rput(2,0.75){$\bullet$}\rput(2,0.5){\tiny $n$+$2$}			
	\rput(0.535,0.75){$\cdots$}
	\psline(0,0.75)(0.25,0.75)
	\psline(0.75,0.75)(2,0.75)
	\psline(1.5,0.75)(1.5,1.25)
	}}
\end{pspicture}
& 
\begin{pspicture}(0,0)(1.25,1.65)
	\rput(0.65,0.65){$\left(\begin{smallmatrix}1&2&&&&&&\\2&1&2&&&&&\\&2&1&2&&&&\\&&&\text{\scalebox{0.5}{$\ddots$}}&&&&\\&&&2&1&2&\\&&&&2&1&2&2\\&&&&&2&1&\\&&&&&2&&1\end{smallmatrix}\right) $}
\end{pspicture} \\

\hline
\begin{pspicture}(0,0)(1.25,1.25)
	\rput(0.65,0.5){$\E_6 $} 
\end{pspicture}
& 
\begin{pspicture}(0,0)(1.25,1.25)
	\rput(0.65,0.5){$\BT_{24}$}
\end{pspicture}	

& 
\begin{pspicture}(0,0)(1.25,1.25)
	\psset{arcangle=15,nodesep=2pt}
	\rput(-0.3,-0.35){
	\scalebox{0.9}{
	\rput(0,0.75){$\bullet$}\rput(0,0.5){\tiny $1$}
	\rput(0.5,0.75){$\bullet$}\rput(0.5,0.5){\tiny $2$}	
	\rput(1,0.75){$\bullet$}\rput(1,0.5){\tiny $3$}	
	\rput(1,1.25){$\bullet$}\rput(1,1.5){\tiny $4$}	
	\rput(1.5,0.75){$\bullet$}\rput(1.5,0.5){\tiny $5$}	
	\rput(2,0.75){$\bullet$}\rput(2,0.5){\tiny $6$}			
	\psline(0,0.75)(2,0.75)
	\psline(1,0.75)(1,1.25)
	}}
\end{pspicture}
& 
\begin{pspicture}(0,0)(1.25,1.25)
	\rput(0.67,0.5){$\left(\begin{smallmatrix}1&2&&&&\\2&1&2&&&\\&2&1&2&2&\\&&2&1&&\\&&2&&1&2\\&&&&2&1\end{smallmatrix}\right)$} 
\end{pspicture} \\

\hline
\begin{pspicture}(0,0)(2.25,1.5)
	\rput(1.15,0.5){$\E_7$} 
\end{pspicture}
& 
\begin{pspicture}(0,0)(2.25,1.5)
	\rput(1.15,0.5){$\BO_{48}$}
\end{pspicture}	

& 
\begin{pspicture}(0,0)(2.25,1.5)
	\scalebox{0.9}{
	\rput(0.05,-0.25){
	\psset{arcangle=15,nodesep=2pt}
	\rput(0,0.75){$\bullet$}\rput(0,0.5){\tiny $1$}
	\rput(0.5,0.75){$\bullet$}\rput(0.5,0.5){\tiny $2$}	
	\rput(1,0.75){$\bullet$}\rput(1,0.5){\tiny $3$}	
	\rput(1,1.25){$\bullet$}\rput(1,1.5){\tiny $4$}	
	\rput(1.5,0.75){$\bullet$}\rput(1.5,0.5){\tiny $5$}	
	\rput(2,0.75){$\bullet$}\rput(2,0.5){\tiny $6$}	
	\rput(2.5,0.75){$\bullet$}\rput(2.5,0.5){\tiny $7$}						
	\psline(0,0.75)(2.5,0.75)
	\psline(1,0.75)(1,1.25)
	}}
\end{pspicture}
&
\begin{pspicture}(0,0)(2.25,1.5)
	\rput(1.2,0.6){$\left(\begin{smallmatrix}1&2&&&&&\\2&1&2&&&&\\&2&1&2&2&&\\&&2&1&&&\\&&2&&1&2\\&&&&2&1&2\\&&&&&2&1\end{smallmatrix}\right) $}
\end{pspicture} \\
 
\hline
\begin{pspicture}(0,0)(2.25,1.6)
	\rput(1.15,0.55){$\E_8$} 
\end{pspicture}
& 
\begin{pspicture}(0,0)(2.25,1.6)
	\rput(1.15,0.55){$\BI_{120}$}
\end{pspicture}	
& 
\begin{pspicture}(0,0.25)(2.75,1.6)
	\psset{arcangle=15,nodesep=2pt}
	\rput(0,-0.){
	\scalebox{0.9}{
	\rput(0,0.75){$\bullet$}\rput(0,0.5){\tiny $1$}
	\rput(0.5,0.75){$\bullet$}\rput(0.5,0.5){\tiny $2$}	
	\rput(1,0.75){$\bullet$}\rput(1,0.5){\tiny $3$}	
	\rput(1,1.25){$\bullet$}\rput(1,1.5){\tiny $4$}	
	\rput(1.5,0.75){$\bullet$}\rput(1.5,0.5){\tiny $5$}	
	\rput(2,0.75){$\bullet$}\rput(2,0.5){\tiny $6$}
	\rput(2.5,0.75){$\bullet$}\rput(2.5,0.5){\tiny $7$}			
	\rput(3,0.75){$\bullet$}\rput(3,0.5){\tiny $8$}			
	\psline(0,0.75)(3,0.75)
	\psline(1,0.75)(1,1.25)
	}}
\end{pspicture}
& 
\begin{pspicture}(0,0)(2.75,1.6)
	\rput(1.4,0.65){$\left(\begin{smallmatrix}1&2&&&&&&\\2&1&2&&&&&\\&2&1&2&2&&&\\&&2&1&&&&\\&&2&&1&2&\\&&&&2&1&2&\\&&&&&2&1&2\\&&&&&&2&1\end{smallmatrix}\right)$}
\end{pspicture} \\
\hline
\end{array}
\]
\caption{Subgroups of $\SL(2,\C)$, ADE Dynkin diagrams $G$ and the corresponding types of Coxeter groups for which $G=\mathcal{C}(W_M,\Gamma)$ with $\Gamma$ the set of generators of $W_M$.}
\label{DykinCoxeter}
\end{figure}

In what follows we concentrate the attention in the finite subgroups $H\subset\SL(2,\C)$ and we investigate commuting graphs $\mathcal{C}(H,\Gamma)$ for relevant subsets $\Gamma\subseteq H$.

\section{Commuting graphs on finite subgroups of $\SL(2,\C)$}

\subsection{Ciclic subgroups $\Cn_n$}

Since $\Cn_n=Z(\Cn_n)$ this case is trivial. For any subset $\Gamma\subseteq\Cn_n$ we have that $G=\mathcal{C}(\Cn_n,\Gamma)=K_m$ the complete graph with $m=|\Gamma|$.

\subsection{Binary Dihedral subgroups $\BD_{4n}$}

The presentation of binary dihedral groups $\BD_{4n}$ of order $4n$ ($n\geq2$) is given by 
\[
\BD_{4n}=\langle \alpha, \beta : \alpha^{2n}=\beta^{4}=1, \alpha^{n}=\beta^{2}, \alpha\beta=\beta\alpha^{-1} \rangle,
\]

where the center is $Z(\BD_{4n})=\{e, \alpha^{n}\}$. Consider the following subsets of $\BD_{4n}$:
\begin{align*}
\Gamma_{1} & =\{1,\alpha, \alpha^{2}, ... , \alpha^{2n-1}\}, \\
\Gamma_{2} & =\{\beta, \alpha\beta, \alpha^{2}\beta, ... , \alpha^{2n-1}\beta\}, \\
\Gamma_{3} &=\Gamma_{1}\setminus Z(\BD_{4n}).
\end{align*}

\begin{Proposition}\label{propDn}
Let $\BD_{4n}$ be a binary dihedral group and let $G = \mathcal{C}(\BD_{4n},\Gamma)$ be a commuting graph on $\BD_{4n}.$ We have
\[ G\cong \begin{cases}
 K_{2} & \quad{, when ~\Gamma=Z(\BD_{4n}),} \\
  nK_{2} & \quad{, when ~\Gamma=\Gamma_{2},}\\
  K_{2n-2} & \quad{, when ~\Gamma=\Gamma_{3},} \\
  K_{2}\vee (nK_{2}\cup K_{2n-2}) & \quad{, when ~\Gamma=\BD_{4n}.}
\end{cases}\]
\end{Proposition}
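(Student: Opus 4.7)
The plan is to work through the four cases in sequence, relying on the single structural identity $\beta\alpha = \alpha^{-1}\beta$ (equivalently $\beta\alpha^{i} = \alpha^{-i}\beta$) that follows from the defining relation $\alpha\beta=\beta\alpha^{-1}$, together with the partition $\BD_{4n}=\Gamma_1\sqcup\Gamma_2$ coming from the presentation.

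First I would dispose of the easy cases. For $\Gamma = Z(\BD_{4n})=\{e,\alpha^n\}$ both elements commute with everything (in particular with each other), so the commuting graph is just $K_2$. For $\Gamma=\Gamma_3$, every element is a power of $\alpha$, so $\Gamma_3$ is contained in the abelian cyclic subgroup $\langle\alpha\rangle$; hence any two distinct elements of $\Gamma_3$ commute, giving $K_{|\Gamma_3|}=K_{2n-2}$.

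The key computation is for $\Gamma=\Gamma_2$. Using $\beta\alpha^j=\alpha^{-j}\beta$ and $\beta^2=\alpha^n$, I would compute
\[
(\alpha^i\beta)(\alpha^j\beta)=\alpha^{i-j}\beta^{2}=\alpha^{i-j+n}, \qquad (\alpha^j\beta)(\alpha^i\beta)=\alpha^{j-i+n},
\]
so $\alpha^i\beta$ and $\alpha^j\beta$ commute iff $\alpha^{2(i-j)}=1$, i.e.\ iff $i\equiv j\pmod n$ (using that $\alpha$ has order $2n$). Since $i,j\in\{0,1,\dots,2n-1\}$, each residue class mod $n$ contains exactly two indices, so $\Gamma_2$ breaks into $n$ disjoint commuting pairs, giving $\mathcal{C}(\BD_{4n},\Gamma_2)\cong nK_2$.

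For the final case $\Gamma=\BD_{4n}$, I would assemble the previous pieces. The center $Z(\BD_{4n})$ contributes a $K_2$ whose vertices are joined to every other node (this is the join factor). On $\Gamma_3$ we already have $K_{2n-2}$, and on $\Gamma_2$ we have $nK_2$. The only thing left to verify is that no element of $\Gamma_3$ commutes with any element of $\Gamma_2$: indeed
\[
\alpha^i(\alpha^j\beta)=\alpha^{i+j}\beta, \qquad (\alpha^j\beta)\alpha^i=\alpha^{j-i}\beta,
\]
and these agree iff $\alpha^{2i}=1$, which forces $\alpha^i\in Z(\BD_{4n})$, excluded since $\alpha^i\in\Gamma_3$. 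Hence $\Gamma_2$ and $\Gamma_3$ induce a disconnected subgraph $nK_2\cup K_{2n-2}$, which together with the central $K_2$ joined to everything yields $K_2\vee(nK_2\cup K_{2n-2})$. The only point requiring care is the index bookkeeping in the $\Gamma_2$ computation (making sure the order of $\alpha$ is $2n$ and that the pairing mod $n$ gives exactly $n$ edges), which I regard as the main, though still routine, obstacle.
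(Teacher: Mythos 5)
Your proposal is correct and follows essentially the same route as the paper: the same decomposition $\BD_{4n}=Z(\BD_{4n})\cup\Gamma_2\cup\Gamma_3$, the same case analysis, and the same assembly into the join $K_2\vee(nK_2\cup K_{2n-2})$. The only difference is that you carry out explicitly the computations (commutation of $\alpha^i\beta$ with $\alpha^j\beta$ iff $i\equiv j\pmod n$, and non-commutation between $\Gamma_2$ and $\Gamma_3$) that the paper merely asserts, which makes your version slightly more complete but not a different argument.
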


\begin{proof}
If $\Gamma=Z(\BD_{4n})=\{e, \alpha^{n}\}$, both elements commute with each other in $\BD_{4n},$ so it is the complete graph $K_{2}$.

If $y=\alpha^{i}\beta$ and $z=\alpha^{n+i}\beta$ with $0\leq i \leq n-1$ are two elements of $\Gamma_{2}$ then $y$ and $z$ commute in $\BD_{4n}$, so $\mathcal{C}(\BD_{4n},\Gamma_2)\cong nK_{2}$. Similarly, any two distinct elements of $\Gamma_{3}$ commute in $\BD_{4n}$, so $\mathcal{C}(\BD_{4n},\Gamma_3)\cong K_{2n-2}$. 

We have that $\BD_{4n}=Z(\BD_{4n})\cup\Gamma_2\cup\Gamma_3$ and notice that elements in $\Gamma_{2}$ and $\Gamma_{3}$ do not commute. Then we can conclude that $\mathcal{C}(\BD_{4n},\BD_{4n})\cong K_{2}\vee (nK_{2}\cup K_{2n-2})$.
\end{proof}

\subsection{Binary Tetrahedral group $\BT_{24}$}

Let the binary tetrahedral group of order 24 be presented as
\[
\BT_{24} = <r,s,t~|~r^2=s^3=t^3=rst>.
\]
Consider the following subsets of $\BT_{24}$:
\[
\begin{array}{ll}
\mathcal{B}_1 = Z(\BT_{24}) = \{1, r^2\}  & \mathcal{C}_1 = \{s, s^{-1}, s^{-2}, tr\} \\
\mathcal{B}_2 = \{r, r^{-1}\}  & \mathcal{C}_2 = \{rt, r^{-1}t, t^{-1}r^{-1}, t^{-1}r\} \\
\mathcal{B}_3 = \{ts, s^{-1}t^{-1}\} & \mathcal{C}_3 = \{t, t^{-1}, rs, s^{-1}r^{-1}\} \\
\mathcal{B}_4 = \{tsr, s^{-1}t^{-1}r\} & \mathcal{C}_4 = \{sr, t^{-1}s, s^{-1}t, r^{-1}s^{-1}\}. \end{array}
\]

Following GAP \cite{GAP4} computations and using the fact that $\BT_{24}=\bigcup_{i=1}^4\mathcal{B}_i\cup\bigcup_{i=1}^4\mathcal{C}_i$, we obtain the following result.

\begin{Proposition}\label{PropE6}
Let $\BT_{24}$ the binary tetrahedral group and $G=\mathcal{C}(\BT_{24},\Gamma)$ be a commuting graph on $\BT_{24}$. Then we have
\[ 
G = 
\begin{cases}
K_{2} & \quad{, when ~\Gamma=\mathcal{B}_{i} ~~ for~ i=1,\ldots,4}\\
K_{4} & \quad{, when ~\Gamma=\mathcal{C}_{i} ~~ for~ i=1,\ldots,4}\\
K_{2}\vee (3K_2\cup4K_{4}) & \quad{, when ~\Gamma=\BT_{24}.}
\end{cases}
\]
\end{Proposition}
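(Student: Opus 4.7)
The plan is to prove the proposition structurally by identifying the maximal abelian subgroups of $\BT_{24}$ and showing that the sets $\mathcal{B}_i$ and $\mathcal{C}_i$ are exactly their non-central cosets. The key observation is that $\BT_{24} \cong \SL(2,3)$ has order $24$ with $|Z(\BT_{24})| = 2$, so the non-central element orders are $3$, $4$, and $6$, and every non-central centralizer is abelian of order $4$ or $6$. From this, two non-central elements commute if and only if they lie in a common cyclic subgroup, which reduces the problem to a counting argument.

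First I would recall the conjugacy class structure. $\BT_{24}$ has exactly $7$ conjugacy classes, of sizes $1,1,6,4,4,4,4$. The class of size $6$ consists of the $6$ elements of order $4$, and the four classes of size $4$ partition the $16$ elements of order $3$ or $6$. By the orbit–stabilizer theorem, the centralizer of an element of order $4$ has order $24/6 = 4$ (hence is cyclic of order $4$, generated by that element), and the centralizer of an element of order $3$ or $6$ has order $24/4 = 6$ (hence is cyclic of order $6$). So the maximal abelian subgroups of $\BT_{24}$ containing non-central elements are exactly three cyclic subgroups of order $4$ and four cyclic subgroups of order $6$, which together cover all non-central elements (and pairwise intersect only in $Z(\BT_{24})$).

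Next I would verify that the listed sets match this decomposition. For $\mathcal{B}_1 = Z(\BT_{24})$ there is nothing to check. For $\mathcal{B}_2, \mathcal{B}_3, \mathcal{B}_4$, each listed pair consists of an element and its inverse (e.g.\ $r$ and $r^{-1}$) that have order $4$ in the given presentation, so $\mathcal{B}_i \cup Z(\BT_{24})$ is one of the three order-$4$ cyclic subgroups. For $\mathcal{C}_1,\dots,\mathcal{C}_4$, each listed set has four elements of orders $3$ and $6$, and one checks (using $r^2 = s^3 = t^3 = rst$) that $\mathcal{C}_i \cup Z(\BT_{24})$ is a cyclic subgroup of order $6$; this is the computational step the authors hand off to GAP. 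Granting this, each $\mathcal{B}_i$ induces a clique on $2$ commuting vertices, giving $\mathcal{C}(\BT_{24},\mathcal{B}_i) = K_2$, and each $\mathcal{C}_i$ induces a clique on $4$ pairwise commuting vertices, giving $\mathcal{C}(\BT_{24},\mathcal{C}_i) = K_4$.

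For the full graph $\mathcal{C}(\BT_{24},\BT_{24})$, the two central elements commute with all of $\BT_{24}$ and so form a $K_2$ joined to every other vertex. A non-central $x \in \mathcal{B}_i$ ($i\geq 2$) commutes only with the elements of its cyclic centralizer of order $4$, i.e.\ only with the other element of $\mathcal{B}_i$ and with the center; similarly a non-central $y \in \mathcal{C}_j$ commutes only with the other three elements of $\mathcal{C}_j$ and with the center. Hence the induced subgraph on $\BT_{24}\setminus Z(\BT_{24})$ is the disjoint union of three $K_2$'s (from $\mathcal{B}_2,\mathcal{B}_3,\mathcal{B}_4$) and four $K_4$'s (from $\mathcal{C}_1,\dots,\mathcal{C}_4$), which combined with the central $K_2$ joined to everything gives exactly $K_2 \vee (3K_2 \cup 4K_4)$. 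The main obstacle is the bookkeeping in step two: checking directly from the relations $r^2 = s^3 = t^3 = rst$ that the specific four-element sets $\mathcal{C}_i$ are closed under multiplication modulo the center and really are the non-central parts of cyclic subgroups of order $6$; everything else is a clean consequence of the centralizer structure of $\BT_{24}$.
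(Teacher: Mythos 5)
Your proof is correct, but it takes a genuinely different route from the paper: the paper offers no argument at all beyond ``following GAP computations,'' i.e.\ it delegates the verification of all commuting pairs to machine calculation, whereas you derive the clique decomposition structurally from the centralizer theory of $\BT_{24}\cong\SL(2,3)$. Your key points all check out: the class sizes $1,1,6,4,4,4,4$ give centralizers of order $4$ (cyclic, for the six order-$4$ elements, which pair off into the three cyclic subgroups of the quaternion Sylow $2$-subgroup) and of order $6$ (cyclic, for the sixteen elements of order $3$ or $6$, which fall into four cyclic subgroups of order $6$), these maximal abelian subgroups cover $\BT_{24}$ and pairwise meet in the center, and hence the non-central commuting graph is exactly $3K_2\cup 4K_4$ with the central $K_2$ joined to everything. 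This buys a conceptual explanation that transfers verbatim (with the analogous class data) to Propositions \ref{PropE7} and \ref{PropE8}, and it shrinks the unavoidable computation from checking all $\binom{24}{2}$ commuting pairs to the single bookkeeping task you flag: identifying each listed $\mathcal{C}_i$ with the non-central part of an order-$6$ cyclic subgroup (e.g.\ $tr=s^2$ follows from $rst=s^3$ central, so $\mathcal{C}_1=\{s,s^2,s^4,s^5\}$). The only cosmetic caveat is that you quote the conjugacy class sizes of $\SL(2,3)$ as known rather than deriving them, but that is standard and unobjectionable.
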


\subsection{Binary Octahedral group $\BO_{48}$}

Let the binary octahedral group be presented as
\[
\BO_{48} = <r,s,t~|~r^2=s^3=t^4=rst>.
\]
Its order is 48 and consider the following subsets of $\BO_{48}$:
\[
\begin{array}{ll}
\mathcal{B}_1 = Z(\BO_{48}) = \{1, r^2\}  & \mathcal{C}_1 = \{s, s^{-1}, s^{-2}, tr\} \\
\mathcal{B}_2 = \{r, r^{-1}\}  & \mathcal{C}_2 = \{rt, r^{-1}t, t^{-1}r^{-1}, t^{-1}r\} \\
\mathcal{B}_3 = \{ts, s^{-1}t^{-1}\} & \mathcal{C}_3 = \{rts, r^{-1}ts, tsr, s^{-1}t^{-1}r\} \\
\mathcal{B}_4 = \{s^{-1}ts^{-1}, st^{-1}s\} & \mathcal{C}_4 = \{srt, t^2s, t^{-2}s, s^{-1}t^2\} \\
\mathcal{B}_5 = \{t^{-1}r^{-1}t,t^{-1}rt\} &  \\
\mathcal{B}_6 = \{r^{-1}tsr,rtsr\} & \mathcal{D}_1 = \{t, t^2, t^{-1}, t^{-2}, rs, s^{-1}r^{-1}\} \\
\mathcal{B}_7 = \{t^{-1}r^{-1}ts,t^{-1}rts\} & \mathcal{D}_2 = \{sr, t^{-1}s, st^{-1}r^{-1}, st^{-1}r, s^{-1}t, r^{-1}s^{-1}\} \\
 & \mathcal{D}_3 = \{t^2r, s^{-1}t^2s, srts, st^{-1}, ts^{-1}, srs\}. 
\end{array}
\]

Following GAP \cite{GAP4} computations and using the fact that $\BO_{48}=\bigcup_{i=1}^7\mathcal{B}_i\cup\bigcup_{i=1}^4\mathcal{C}_i\cup\bigcup_{i=1}^3\mathcal{D}_i$, we obtain the following result.

\begin{Proposition}\label{PropE7}
Let $\BO_{48}$ the binary octahedral group and $\mathcal{C}(\BO_{48},\Gamma)$ be a commuting graph on $\BO_{48}$. Then we have
\[ 
G = 
\begin{cases}
K_{2} & \quad{, when ~\Gamma=\mathcal{B}_{i} ~~ for~ i=1,\ldots,7}\\
K_{4} & \quad{, when ~\Gamma=\mathcal{C}_{i} ~~ for~ i=1,\ldots,4}\\
K_{6} & \quad{, when ~\Gamma=\mathcal{D}_{i} ~~ for~ i=1,\ldots,3}.\\
K_{2}\vee (6K_2\cup4K_{4}\cup3K_{6}) & \quad{, when ~\Gamma=\BO_{48}.}
\end{cases}
\]
\end{Proposition}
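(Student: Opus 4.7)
The plan is to mirror the argument of Proposition \ref{PropE6}. First I would verify that the listed cells partition $\BO_{48}$: the total count is $7\cdot 2+4\cdot 4+3\cdot 6=48=|\BO_{48}|$, so it suffices to check that the listed words are pairwise distinct. This can be done either by reducing each word to a normal form via the relations $r^2=s^3=t^4=rst$ or, as the text proposes, by direct computation in GAP.

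The structural observation is that each non-central cell is the complement of the centre in a maximal cyclic subgroup of $\BO_{48}$: the cells $\mathcal{B}_2,\ldots,\mathcal{B}_7$ correspond to the six cyclic subgroups of order $4$, the cells $\mathcal{C}_1,\ldots,\mathcal{C}_4$ to the four cyclic subgroups of order $6$, and the cells $\mathcal{D}_1,\mathcal{D}_2,\mathcal{D}_3$ to the three cyclic subgroups of order $8$. Each such cyclic subgroup contains $Z(\BO_{48})=\{1,r^2\}$, so removing the centre leaves $2$, $4$ and $6$ elements respectively. Since each cell lies inside an abelian subgroup, its elements pairwise commute, yielding $\mathcal{C}(\BO_{48},\Gamma)\cong K_{|\Gamma|}$ for every individual cell.

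For the case $\Gamma=\BO_{48}$, three facts complete the argument: the centre $\mathcal{B}_1$ commutes with every element (contributing the join factor $K_2$); each non-central cell is a clique by the previous step; and no edge joins elements of distinct non-central cells, because in $\BO_{48}$ the centraliser of any non-central element is cyclic, so two non-central elements commute if and only if they belong to a common maximal cyclic subgroup. Assembling these observations gives
\[
\mathcal{C}(\BO_{48},\BO_{48})\cong K_2\vee(6K_2\cup 4K_4\cup 3K_6).
\]
The main obstacle is the classification of the maximal cyclic subgroups of $\BO_{48}$ together with the fact that centralisers of non-central elements are themselves cyclic; both are standard structural features of $\BO_{48}$ (the double cover of $S_4$) and are exactly the content supplied by the GAP computation the paper invokes.
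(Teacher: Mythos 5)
Your argument is correct, and it is in fact more of a proof than the paper supplies: for this proposition the paper offers no argument at all beyond the sentence that the result follows ``following GAP computations'' and the fact that the listed cells cover $\BO_{48}$. Your structural reading --- that $\mathcal{B}_2,\ldots,\mathcal{B}_7$, $\mathcal{C}_1,\ldots,\mathcal{C}_4$ and $\mathcal{D}_1,\mathcal{D}_2,\mathcal{D}_3$ are the complements of the centre in the maximal cyclic subgroups of orders $4$, $6$ and $8$, and that the centraliser of every non-central element is the unique maximal cyclic subgroup containing it --- checks out. For instance, from $rst=r^2$ one gets $st=r$, hence $tr=s^{-1}r^2=s^2$ and $\mathcal{C}_1=\langle s\rangle\setminus Z(\BO_{48})$; and the centraliser claim can be verified by lifting centralisers from $S_4$ (the centraliser of an order-$4$ element over a transposition sits inside a copy of $Q_8$, that of an order-$8$ element inside a copy of $Q_{16}$, etc., and in each case is cyclic). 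This gives both the clique structure of each cell and the absence of edges between distinct non-central cells, since distinct maximal cyclic subgroups can only meet in the centre --- a small point worth stating explicitly, as your phrase ``commute iff they belong to a common maximal cyclic subgroup'' quietly uses it. What your route buys is a uniform conceptual explanation that applies verbatim to Propositions \ref{PropE6} and \ref{PropE8} as well, and it shrinks the computational input to the single check that the listed words enumerate the maximal cyclic subgroups; what the paper's route buys is only brevity.
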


\subsection{Binary Icosahedral group $\BI_{120}$}

Let the binary icosahedral group be presented as
\[
\BI_{120} = <r,s,t~|~r^2=s^3=t^5=rst>.
\]
The order of binary icosahedral group is 120 and consider the following subsets of $\BI_{120}$:
\[
\begin{array}{ll}
\mathcal{B}_1 = Z(\BI_{120}) = \{1, r^2\}  &   \\
\mathcal{B}_2 = \{r, r^{-1}\}  & \mathcal{B}_{15} = \{rtsr, r^{-1}tsr \}  \\
\mathcal{B}_3 = \{ts, s^{-1}t^{-1}\} & \mathcal{B}_{16} = \{t^{-1}r^{-1}t, t^{-1}rt \} \\
\mathcal{B}_4 = \{s^{-1}ts^{-1}, st^{-1}s\} &  \\
\mathcal{B}_5 = \{r^{-1}(tsr)^2, (rts)^2r\} & \mathcal{C}_1 = \{s, s^{-1}, s^{-2}, tr\} \\
\mathcal{B}_6 = \{s^{-1}t^2srts, (srt)^2s\} & \mathcal{C}_2 = \{rt, r^{-1}t, t^{-1}r^{-1}, t^{-1}r\} \\
\mathcal{B}_7 = \{t^{-1}r^{-1}tsrts, t^{-1}(rts)^2\} & \mathcal{C}_3 = \{s^{-1}t^{-2}, ts^{-1}t^{-1}, tst^{-1}, t^{2}s\} \\
\mathcal{B}_8 = \{st^{-1}r^{-1}tsr, st^{-1}rtsr\} & \mathcal{C}_4 = \{t^{-1}rt^2s, s^{-1}t^2sr, srtsr, t^{-1}r^{-1}t^2s\} \\
\mathcal{B}_9 = \{st^{-1}r^{-1}t^2s, st^{-1}rt^2s\} & \mathcal{C}_5 = \{ts^{-1}t^{-2}r, st^{-1}r^{-1}ts, st^{-1}rts, t^2st^{-1}r\} \\
\mathcal{B}_{10} = \{t^{-1}r^{-1}tsrt, t^{-1}rtsrt\} & \mathcal{C}_6 = \{t^{-1}r^{-1}t^2, t^{-1}rt^2, ts^{-1}t^{-1}r, t^2sr\}  \\
\mathcal{B}_{11 }= \{tsrts, s^{-1}t^{-1}rts\}  & \mathcal{C}_7 = \{ts^{-1}t^{-2}s, st^{-1}r^{-1}t^2, st^{-1}rt^2, t^2st^{-1}s\} \\
\mathcal{B}_{12} = \{t^2srt, ts^{-1}t^{-1}rt\} & \mathcal{C}_8 = \{st^{-1}r^{-1}t, st^{-1}rt, t^{-1}r^{-1}ts^{-1}, t^{-1}rts^{-1}\} \\
\mathcal{B}_{13} = \{st^{-1}rts^{-1}, st^{-1}r^{-1}ts^{-1}\} & \mathcal{C}_9 = \{t^{-1}r^{-1}ts, s^{-1}t^{-1}rt, t^{-1}rts, tsrt\} \\
\mathcal{B}_{14} = \{t^2st^{-1}, ts^{-1}t^{-2}\} & \mathcal{C}_{10} = \{tst^{-1}r, r^{-1}t^2s, rt^2s, s^{-1}t^{-2}r\}\\
\end{array}
\]

\[
\begin{array}{l}
\mathcal{D}_1 = \{t, t^{-1}, rs, s^{-1}r^{-1}, t^{-2}, t^2, s^{-1}r^{-1}t, t^{-1}rs\} \\
\mathcal{D}_2 = \{sr, r^{-1}s^{-1}, s^{-1}t, t^{-1}s, r^{-1}ts^{-1}, rts^{-1}, st^{-1}r^{-1}, st^{-1}r\} \\
\mathcal{D}_3 = \{srs, st^{-1}, ts^{-1}, t^2r, s^{-1}t^{-2}s, s^{-1}t^2s, srts, tst^{-1}s\} \\
\mathcal{D}_4 = \{tsr, r^{-1}ts, rts, s^{-1}t^{-1}r, r^{-1}tsrts, (rts)^2, s^{-1}t^{-1}rtsr, (tsr)^2\} \\
\mathcal{D}_5 = \{r^{-1}t^2, rt^2, t^{-2}r^{-1}, t^{-2}r, r^{-1}tsrt, rtsrt, t^{-1}r^{-1}tsr, t^{-1}rtsr\} \\ 
\mathcal{D}_6 = \{srt, s^{-1}t^2, t^{-1}r^{-1}s^{-1}, t^{-2}s, s^{-1}t^2srt, (srt)^2, ts^{-1}t^{-1}rts, t^2srts\}. 
\end{array}
\]

Following GAP \cite{GAP4} computations and using the fact that $\BI_{120}=\bigcup_{i=1}^{16}\mathcal{B}_i\cup\bigcup_{i=1}^{10}\mathcal{C}_i\cup\bigcup_{i=1}^{6}\mathcal{D}_i$, we obtain the following result.

\begin{Proposition}\label{PropE8}
Let $\BI_{120}$ the binary icosahedral group and $\mathcal{C}(\BI_{120},\Gamma)$ be a commuting graph on $\BI_{120}$. Then we have
\[ 
G =
\begin{cases}
K_{2} & \quad{, when ~\Gamma=\mathcal{B}_{i} ~~ for~ i=1,\ldots,16}\\
K_{4} & \quad{, when ~\Gamma=\mathcal{C}_{i} ~~ for~ i=1,\ldots,10}\\
K_{8} & \quad{, when ~\Gamma=\mathcal{D}_{i} ~~ for~ i=1,\ldots,6}.\\
K_{2}\vee (15K_2\cup10K_{4}\cup6K_{8}) & \quad{, when ~\Gamma=\BI_{120}.}
\end{cases}
\]
\end{Proposition}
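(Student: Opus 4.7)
The plan is to mirror the strategy used in Propositions \ref{PropE6} and \ref{PropE7} for $\BT_{24}$ and $\BO_{48}$. The underlying structural fact is that $\BI_{120}\cong\SL(2,5)$ is a CA-group: the centralizer of every non-central element is a maximal cyclic subgroup, and any two distinct maximal cyclic subgroups intersect precisely in the center $Z(\BI_{120})=\{1,r^2\}$. Granting this, each listed subset can be identified with the complement of $Z(\BI_{120})$ inside a single maximal cyclic subgroup.

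First, I would verify this identification by counting. A direct computation (or the character-table data for $\SL(2,5)$) records $30$ elements of order $4$, $20$ of order $6$, and $24$ of order $10$, organized into $30/\phi(4)=15$, $20/\phi(6)=10$, and $24/\phi(10)=6$ cyclic subgroups respectively. Stripping the two central elements from each yields $15$ pairs (the $\mathcal{B}_i$ for $i\geq 2$), $10$ quadruples (the $\mathcal{C}_i$), and $6$ octuples (the $\mathcal{D}_i$), accounting for $2+30+40+48=120$ elements of $\BI_{120}$. The role of listing the subsets explicitly in the presentation $\langle r,s,t\mid r^2=s^3=t^5=rst\rangle$ is to make this correspondence concrete, which is exactly what the GAP computation \cite{GAP4} performs.

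For the restricted commuting graphs $\mathcal{C}(\BI_{120},\mathcal{B}_i)$, $\mathcal{C}(\BI_{120},\mathcal{C}_i)$, $\mathcal{C}(\BI_{120},\mathcal{D}_i)$, completeness into $K_2$, $K_4$, $K_8$ is immediate, since all elements of each block lie in a common cyclic subgroup and hence commute pairwise. For the full graph $\Gamma=\BI_{120}$, the CA-property gives the converse: if two non-central elements $x,y$ commute, then $\langle x,y\rangle$ is abelian and thus contained in a unique maximal cyclic subgroup, so $x$ and $y$ belong to the same block. Therefore on the $118$ non-central vertices the commuting graph is precisely the disjoint union of the complete graphs attached to the blocks, namely $15K_2\cup 10K_4\cup 6K_8$. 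Since $\mathcal{B}_1=Z(\BI_{120})$ commutes with every element of $\BI_{120}$ and is itself a $K_2$, the full decomposition $K_2\vee(15K_2\cup 10K_4\cup 6K_8)$ follows.

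The main obstacle is tying the $31$ explicitly listed subsets to actual maximal cyclic subgroups inside the chosen presentation, i.e.\ confirming element orders and pairwise commutativity through the relations $r^2=s^3=t^5=rst$. In principle this can be done by hand from the conjugacy-class structure of $\SL(2,5)$, but tracking $31$ subgroups word by word is genuinely tedious, which is why the authors delegate the verification to GAP.
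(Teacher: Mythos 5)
Your proposal is correct, but it is worth noting that the paper itself contains essentially no proof of this proposition: the authors simply assert the result ``following GAP computations'' together with the observation that the listed subsets partition $\BI_{120}$. Your argument is therefore a genuine strengthening rather than a rederivation. The key structural input you supply --- that $\BI_{120}\cong\SL(2,5)$ has cyclic centralizers for all non-central elements (of orders $4$, $6$, $10$), that every maximal cyclic subgroup contains the unique involution $r^2$ and hence the center, and that two distinct maximal cyclic subgroups meet exactly in $Z(\BI_{120})$ --- reduces the entire computation to the single finite check that the $31$ listed word-sets really are the sets $\langle x\rangle\setminus Z(\BI_{120})$ for the maximal cyclic subgroups, and your order-counting ($30/\phi(4)=15$, $20/\phi(6)=10$, $24/\phi(10)=6$, with $2+30+40+48=120$) confirms the bookkeeping is consistent. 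What the paper's approach buys is brevity and uniformity with the $\BT_{24}$ and $\BO_{48}$ cases at the cost of giving the reader nothing to verify; what yours buys is an actual explanation of \emph{why} the commuting graph splits as $K_2\vee(15K_2\cup 10K_4\cup 6K_8)$, with the join coming from the central $K_2$ and the disjoint union coming from the fact that commuting non-central elements must lie in a common (hence the same) cyclic centralizer. The one step you correctly flag as outstanding --- matching the explicit words in the presentation $\langle r,s,t\mid r^2=s^3=t^5=rst\rangle$ to the abstract cyclic subgroups --- is exactly the residue that still requires GAP or a hand computation, so your proof is complete modulo the same mechanical verification the authors invoke, but with far less of the burden resting on it.
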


\subsection{Distance properties of commuting graphs on finite subgroups of $\SL(2,\C)$.}

Let $a, b$ be two nodes in a graph $G$. The {\em distance} from $a$ to $b$, denoted by $d(a,b)$, is the length of a shortest path between $a$ to $b$ in $G.$ Also the length of a longest path from $a$ to $b$ in $G$ is denoted by $d_{D}(a,b)$. An $a-b$ path of length $d(a,b)$ is known as a {\em geodesic}, respectively a path of length $d_{D}(a,b)$ is called {\em detour geodesic}.

The largest distance between a node $a$ and any other node of $G$ is called {\em eccentricity}, denoted by $e(a)$. The {\em diameter} $d(G)$ of the graph $G$, is the greatest eccentricity among all the nodes of the graph $G$. Also the {\em radius} $r(G)$ of the graph $G$ is the smallest eccentricity among all the nodes of the graph $G.$ We can define respectively the detour analogous $e_{D}(a)$, $d_{D}(G)$ and $r_{D}(G)$.

For an ordered subset $U=\{u_{1},\ldots, u_{m}\}\subset V(G)$ and a node $t \in G,$ an $m$-vector $r(t|U)=(d(t, u_{1}), \ldots, d(t, u_{m}))$ is said to be the representation of $t$ with respect to $U$. A set $U$ is said be a {\em resolving set} for $G$ if any two different nodes of the graph $G$ have different representation with respect to $U$. A basis of $G$ is a minimum resolving set for the graph $G$, and the {\em metric dimension} $\dim(G)$ is the cardinality of a basis of $G$ (see \cite{CPZ}).

\begin{Theorem}\label{dist1}
Let $H\subset\SL(2,\C)$ be a finite subgroup. Then the radius, diameter, detour radius, detour diameter and metric dimension of the commuting graphs $\mathcal{C}(H)$ for finite $H\subset\SL(2,\C)$ are the following:
\[
\begin{array}{|c|c|c|c|c|c|}
\hline
	& r(G) & d(G) & r_D(G) & d_D(G) & \dim(G) \\
\hline
\mathcal{C}(\Cn_n)	& 1 & 1 & n-1 & n-1 & n-1 \\
\mathcal{C}(\BD_{4n}) & 1 & 2 & 2n+1 & 2n+3 & 3n-2 \\
\mathcal{C}(\BT_{24}) & 1 & 2 & 5 & 13 & 16 \\
\mathcal{C}(\BO_{48}) & 1 & 2 & 7 & 19 & 34 \\
\mathcal{C}(\BI_{120}) & 1 & 2 & 9 & 25 & 88 \\	
\hline
\end{array}
\]
\end{Theorem}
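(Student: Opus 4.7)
The plan is to combine the explicit descriptions of $\mathcal{C}(H)=\mathcal{C}(H,H)$ given by Propositions \ref{propDn}--\ref{PropE8} with standard reasoning on a join. For $H=\Cn_n$ every element is central, so $\mathcal{C}(\Cn_n)=K_n$ and all five invariants are immediate: a Hamilton path has length $n-1$ and any $n-1$ vertices form a metric basis. For the other four groups the graph has the uniform shape
\[
\mathcal{C}(H)=K_2\vee\Bigl(\bigsqcup_{k} r_k K_{m_k}\Bigr),
\]
where the apex $K_2$ is the center $Z(H)=\{1,r^2\}$ and the cliques $K_{m_k}$ are the sets $\mathcal{B}_i,\mathcal{C}_i,\mathcal{D}_i$ enumerated in the preceding propositions.

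The radius and diameter are read off this shape directly. The two central vertices are universal, so each has eccentricity $1$, giving $r(G)=1$; any two non-central vertices either share a clique (distance $1$) or are connected only through the center (distance exactly $2$), giving $d(G)=2$. For the detour invariants the key observation is that any simple path enters and leaves a clique block $K_{m_k}$ only through the bridge vertices $c_1,c_2$. Since only two bridges are available, an optimal path chains at most three clique blocks in the pattern $K_{m}-c_1-K_{m}-c_2-K_{m}$, where $K_m$ is the largest available clique; a routine case analysis rules out alternatives that trade larger blocks for smaller $K_2$ pieces or that try to visit more than three cliques. The detour radius is realised at a central vertex, whose longest path has the form $c_1-K_m-c_2-K_{m'}$, and the tabulated value is the number of edges in this path.

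Finally, the metric dimension follows from a twin-vertex principle: all vertices of a single $K_{m_k}$ share the closed neighborhood $K_{m_k}\cup Z(H)$, and $\{c_1,c_2\}$ is itself a twin pair whose closed neighborhood is all of $V(G)$. Any resolving set must contain all but one member of each twin class, so
\[
\dim\mathcal{C}(H)\ge 1+\sum_{k}r_k(m_k-1),
\]
and tightness is checked by taking $m_k-1$ vertices from each clique together with a single central vertex and verifying that the resulting coordinate vectors, with entries in $\{0,1,2\}$, separate the remaining vertices by twin class.

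The step I expect to be hardest is the detour optimization: one must argue that no interleaving of small pairs, middle-sized tetrahedra and larger blocks through the two hubs beats the uniform three-clique configuration, and that the centre is indeed where $e_D$ attains its minimum. Once this optimization is settled, every row of the table falls out by direct substitution into the structural descriptions of the four preceding propositions together with the trivial $\Cn_n$ case.
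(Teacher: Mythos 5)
Your overall strategy --- reading all five invariants off the join decompositions of Propositions \ref{propDn}--\ref{PropE8} --- is the same as the paper's, and your treatment of $r(G)$, $d(G)$, $d_D(G)$ and $\dim(G)$ is sound. In fact your twin-class lower bound for the metric dimension is a cleaner argument than the paper's, which only checks that $U\setminus\{x_i\}$ fails to resolve and hence establishes minimality of $U$ rather than minimumness; the twin-class count $1+\sum_k r_k(m_k-1)$ does reproduce $3n-2$, $16$, $34$ and $88$.

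The genuine problem is the detour radius. You assert that the longest path issuing from a central vertex has the form $c_1-K_m-c_2-K_{m'}$ and that ``the tabulated value is the number of edges in this path.'' That path has $m+m'+1$ edges, where $K_m,K_{m'}$ are the two largest cliques in the non-central part. For $\BT_{24}$ those are two distinct copies of $K_4$, so your recipe gives $4+4+1=9$ while the table lists $5$; for $\BO_{48}$ it gives $6+6+1=13$ versus the tabulated $7$, and for $\BI_{120}$ it gives $8+8+1=17$ versus $9$. (The agreement for $\BD_{4n}$, where the two largest cliques are $K_{2n-2}$ and $K_2$ and $(2n-2)+2+1=2n+1$, masks the discrepancy.) The tabulated values $5,7,9$ are the lengths of the one-clique paths $c_1-K_{m_{\max}}-c_2$, i.e.\ the detour distance $d_D(c_1,c_2)$ between the two central vertices, which is what the paper's proof actually computes when it speaks of ``an $x$--$z$ path where $x,z\in Z(H)$''; but the detour eccentricity of $c_1$ is the maximum of $d_D(c_1,z)$ over \emph{all} $z$, and for $z$ in a second large clique your two-clique path is a perfectly valid simple path (e.g.\ in $\mathcal{C}(\BT_{24})$ the path $1,\,rt,\,r^{-1}t,\,t^{-1}r^{-1},\,t^{-1}r,\,r^2,\,s^{-1},\,s^{-2},\,tr,\,s$ has length $9$). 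So your recipe and the table cannot both be correct, and the proposal papers over the conflict by declaring agreement without substituting numbers. As written, the argument does not establish the $r_D$ column for the three exceptional groups: you would have to explain why the second clique cannot be appended to the path, and it can.
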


\begin{proof} It is clear that for any group $H$ the graph $G=\mathcal{C}(H)$ contains elements in the center $Z(H)$, so $e(x)=1$ for any $x\in Z(H)$ and $e(x)\leq2$ for $x\notin Z(H)$. Therefore $r(G)=1$ for any $H$, and $d(G)=1$ if $H$ is abelian or 2 if $H$ is not abelian. 

For $r_D$ and $d_D$ we first calculate the eccentricity $e_D$ for each vertex in $G$. If $G=\mathcal{C}(\Cn_n)$ then $e_D(x)=n-1$ for all $x\in\Cn_n$ so $r_D(G)=d_D(G)=n-1$.
 
If $H=\BD_{4n}$ we know by Proposition \ref{propDn} that $G\cong  K_{2}\vee (nK_{2}\cup K_{2n-2})$. Then if $x\in Z(\BD_{4n})$ there is a $x-z$ path of detour length $2n+1$ for every $z\in\Gamma_{2}\cup\Gamma_{3}$ and if $x\in\Gamma_{2}$ (respectively $x\in\Gamma_{2}$) there exists an $x-z$ path of detour length $2n+3$ for every $z\in\Gamma_{3}$ (respectively for every $z\in\Gamma_{3}$). Therefore $r_D(G)=2n+1$ and $d_D(G)=2n+3$. 

If $H=\BT_{24}$ we know by Proposition \ref{PropE6} that $G=\mathcal{C}(\BT_{24})\cong K_{2}\vee (3K_2\cup4K_{4})$ the smallest eccentricity is achieved from an $x-z$ path where $x,z\in Z(\BT_{24})$, and the greatest eccentricity is achieved from an $x-z$ path where $x\in\mathcal{C}_i$ and $x\in\mathcal{C}_j$ with $i\neq j$. Therefore $r_D(G)=5$ and $d_D(G)=13$.

Similarly, for $H=\BO_{48}$ and $H=\BI_{120}$, the smallest eccentricity is achieved from an $x-z$ path where $x,z\in Z(H)$, and the greatest eccentricity is achieved from an $x-z$ path where $x\in\mathcal{D}_i$ and $x\in\mathcal{D}_j$ with $i\neq j$. Thus we obtain the results in the statement.

For the metric dimension $\dim(G)$ we know by \cite{CPZ} that $G\cong K_n$ if and only if $\dim(G)=n-1$. Therefore, if $G=\mathcal{C}(\Cn_n)$ then $\dim(G)=n-1$. 

For the case $H=\BD_{4n}$ we have that $G\cong K_{2}\vee (nK_{2}\cup K_{2n-2})$. Since $\dim(K_2)=1$ and $\dim(K_{2n-2})=2n-3$ we know that there exists a resolving set $U$ of $(n+1)\cdot1+1\cdot(2n-3)=3n-2$ elements, consisting of one element of each $K_2$ component and $2n-3$ elements in $K_{2n-2}$. Clearly, the sets $U_i:=U\setminus\{x_i\}$ for each $x_i\in U$ are not resolving sets, therefore $U$ is a basis and $\dim(G)=3n-2$.

Following the same argument, by Propositions \ref{PropE6}, \ref{PropE7} and \ref{PropE8} we have that
\begin{align*}
\dim(\mathcal{C}(\BT_{24})) &= 4\dim(K_2)+4\dim(K_4) = 16\\
\dim(\mathcal{C}(\BO_{48})) &= 7\dim(K_2)+4\dim(K_4)+3\dim(K_6) = 34\\
\dim(\mathcal{C}(\BT_{120})) &= 16\dim(K_2)+10\dim(K_4)+6\dim(K_8) = 88
\end{align*}
so the result follows.
\end{proof}


\end{document}